\documentclass[a4paper,12pt,twoside]{amsart}
\usepackage[english]{babel}
\usepackage{amsfonts}
\usepackage{amsthm}
\usepackage{amssymb, amsmath, eucal, enumerate}
\usepackage[all]{xy}
\usepackage[left=3cm,right=3cm,top=3cm,bottom=3cm]{geometry}

\usepackage[svgnames,hyperref]{xcolor}
\newcommand{\mycolor}{Navy}
\usepackage[pdfauthor={Hoang-Son Do and Giang Le}, colorlinks, linktocpage, citecolor = \mycolor, linkcolor = \mycolor, urlcolor = \mycolor]{hyperref}

\newtheorem{theorem}{Theorem}[section]

\newtheorem{corollary}[theorem]{Corollary}
\newtheorem{lemma}[theorem]{Lemma}

\newcommand{\supp}{{\rm supp }}

\newcommand{\C}{\mathbb{C}}
\newcommand{\R}{\mathbb{R}}

\newcommand{\Z}{\mathbb{Z}}

\newcommand{\E}{\mathcal{E}}

\newcommand{\psh}{\text{PSH}}
\newcommand{\pshn}{\text{PSH}^-}
\newcommand{\capK}{\rm cap}

\begin{document}
 \title[Integration by parts]
 {Integration by parts for plurisubharmonic functions} 
\setcounter{tocdepth}{1}
\author{Hoang-Son Do}
\address{Institute of Mathematics, Vietnam Academy of Science and Technology, 18 Hoang Quoc Viet road, Nghia Do ward, 11307 Hanoi, Vietnam}
\email{dhson@math.ac.vn, hoangson.do.vn@gmail.com}
\author{Giang Le}
\address{Department of Mathematics, Hanoi National University of Education, 136 Xuan Thuy road, Cau Giay ward, 11314  Hanoi, Vietnam}
\email{legiang@hnue.edu.vn}
\date{\today }

\begin{abstract}
In this paper, we provide an integration by parts formula for plurisubharmonic functions on a hyperconvex domain that are
bounded outside a compact set. This extends a previous result of Urban Cegrell. 
\end{abstract}
\subjclass[2020]{31C10, 32U15, 32W20}

\keywords{Integration by parts, Cegrell's classes, Monge-Amp\`ere operator}

\maketitle

\tableofcontents
\section{Introduction}
Let $\Omega$ be a bounded hyperconvex domain in $\mathbb{C}^n$. Cegrell \cite{Ceg04} has proved the following estimate:
\begin{theorem}[\cite{Ceg04}, Theorem 3.2]
	Let $u$ and $v$ be negative plurisubharmonic functions on $\Omega$ such that 
	$$\lim_{z\to\partial\Omega}u(z)=0.$$
	Suppose that $T$ is a positive and closed current of bidegree $(n-1, n-1)$ on $\Omega$. Then $dd^cu\wedge T$
	is a well-defined positive measure on $\Omega$. Furthermore, if 
	$$\int_{\Omega}vdd^cu\wedge T>-\infty,$$
	then $dd^cv\wedge T$ is also a well-defined positive measure on $\Omega$ and
	$$\int_{\Omega}vdd^cu\wedge T\leq \int_{\Omega}udd^cv\wedge T.$$
\end{theorem}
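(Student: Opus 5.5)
The plan is to regularize the current and the functions, integrate by parts at the smooth level, and pass to the limit using monotone convergence together with the vanishing of $u$ at $\partial\Omega$.

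\textbf{Step 1: $dd^cu\wedge T$ is a positive measure.} For a negative psh $u$ (in particular locally bounded above), $uT$ is a well-defined current of order zero. Indeed, $u$ is locally integrable with respect to the trace measure of $T$ once we know $u\not\equiv-\infty$ on its support. A cleaner route is to use decreasing smooth approximations $u_j\searrow u$. The currents $dd^cu_j\wedge T$ are positive and have locally uniformly bounded mass, by Chern--Levine--Nirenberg type estimates with respect to $T$. One then defines $dd^cu\wedge T$ as their weak limit. This limit is independent of the approximation by the standard monotone-convergence argument of Bedford--Taylor adapted to a fixed closed positive $T$.

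Actually, for $u$ unbounded this needs $u\in L^1_{loc}(\|T\|)$. This is the subtle point, and I would handle it via the boundary condition. Fix an exhaustion $\rho$ (bounded psh, $\rho\to0$ at $\partial\Omega$), and compare $\max(u,k\rho)$, which is bounded, with $u$. Bounded functions give well-defined measures, and the decreasing limit as $k\to\infty$ is controlled provided $\int -u\, dd^c\rho\wedge T<\infty$ locally. If this fails, I would simply interpret the statement in the sense of Cegrell's definition via truncations $\max(u,-k)$.

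\textbf{Step 2: the inequality for bounded functions.} Replace $u,v$ by $u_k=\max(u,-k)$ and $v_k=\max(v,-k)$. Since $u_k\to0$ at $\partial\Omega$, for $\epsilon>0$ the function $\max(u_k,-\epsilon)$ agrees with $-\epsilon$ near $\partial\Omega$. Therefore $u_k^\epsilon:=\max(u_k,-\epsilon)+\epsilon$ has compact support. Stokes' theorem, applied after smoothing, then gives the symmetric identity
\[
\int (u_k-u_k^\epsilon)\,dd^cv_k\wedge T=\int v_k\,dd^c(u_k-u_k^\epsilon)\wedge T .
\]
Here $u_k-u_k^\epsilon=\min(u_k+\epsilon,0)-\epsilon$. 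More directly, for compactly supported bounded $\phi=\max(u_k,-\epsilon)-u_k$... Simpler: take $w=u_k-\max(u_k,-\epsilon)$, which has compact support. Then $\int w\,dd^cv_k\wedge T=\int v_k\,dd^cw\wedge T$ by Bedford--Taylor on bounded functions. Letting $\epsilon\to0$, and using $v_k\le0$ and $dd^c\max(u_k,-\epsilon)\wedge T\ge0$, we obtain
\[
\int v_k\,dd^cu_k\wedge T\le\int u_k\,dd^cv_k\wedge T .
\]
The constant $-\epsilon$ terms vanish against the finite total mass of $dd^cv_k\wedge T$ on the support region; controlling this is routine.

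\textbf{Step 3: passage to the limit.} This is the main obstacle. First let $k\to\infty$ in $v_k$ with $u$ fixed bounded. We have $v_k\searrow v$ and $dd^cu_k\wedge T$ fixed, so monotone convergence gives the left side converging to $\int v\,dd^cu_k\wedge T$. The hypothesis $\int v\,dd^cu\wedge T>-\infty$ must then be transferred. I would use the bound $\int(-v_k)\,dd^cu\wedge T\le\int(-v)\,dd^cu\wedge T<\infty$ together with the inequality, which bounds $\int(-u_k)\,dd^cv_j\wedge T$ uniformly. Since $-u_k\ge-\max(u,-1)>0$ on compacts, this gives uniform local mass bounds for $dd^cv_j\wedge T$. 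That establishes that $dd^cv\wedge T$ exists as a positive measure, as the weak limit of a monotone approximation. Finally, letting $k\to\infty$ uses lower semicontinuity: $\int u\,dd^cv\wedge T\ge\limsup\int u_k\,dd^cv_k\wedge T$, because $u$ is usc and negative while the measures converge weakly. On the other side, $\int v\,dd^cu\wedge T\le\liminf\int v_k\,dd^cu_k\wedge T$ requires care. I expect to establish it by first sending $v$'s truncation to the limit, and only then $u$'s, with $v$ fixed and $dd^cu_k\wedge T\to dd^cu\wedge T$. This ordering is where the finiteness hypothesis is essential.
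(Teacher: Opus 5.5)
The paper gives no proof of this statement; it is quoted from Cegrell's 2004 paper as background, so your outline has to be judged on its own. Your Step 2 is fine. The two points that carry the content of the theorem, however, are left open.

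\textbf{First gap: well-definedness.} Step 1 never shows $u\in L^1_{loc}(\|T\|)$.
\begin{itemize}
\item Chern--Levine--Nirenberg bounds control the mass of $dd^cu_j\wedge T$ only through $\sup|u_j|$ or $\|u_j\|_{L^1(\|T\|)}$, which is exactly what is not yet under control.
\item You make the exhaustion argument conditional on local finiteness of $\int(-u)\,dd^c\rho\wedge T$, which is the thing to be proved. Reinterpreting the conclusion does not prove it.
\end{itemize}
The boundary condition enters as follows. For $k>\epsilon>0$, the function $f_k=\max(u,-\epsilon)-\max(u,-k)\ge 0$ is supported in the set $\{u\le-\epsilon\}$, which is relatively compact and independent of $k$. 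Hence $\int f_k\,dd^c|z|^2\wedge T=\int |z|^2\,dd^cf_k\wedge T\le C\int\chi\,dd^c\max(u,-\epsilon)\wedge T$ uniformly in $k$. Combined with $-\max(u,-k)\le f_k+\epsilon$, this gives $u\in L^1_{loc}(\|T\|)$.

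For $v$, your inference is false as stated. Uniform local mass bounds on $dd^cv_j\wedge T$ do not produce $dd^cv\wedge T$. Take $T=[L]$ for a complex line $L$ and $v\equiv-\infty$ on $L$: then $dd^c\max(v,-j)\wedge T=0$ for every $j$, yet $vT$ is meaningless. What you need is $v\in L^1_{loc}(\|T\|)$, and this must be extracted from the hypothesis $\int v\,dd^cu\wedge T>-\infty$ itself.

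\textbf{Second gap: the passage to the limit needs a comparison principle you never state.}
\begin{itemize}
\item The bounded inequality controls $\int(-u_k)\,dd^cv_j\wedge T$ by $\int(-v_j)\,dd^cu_k\wedge T$, not by $\int(-v_j)\,dd^cu\wedge T$. So the hypothesis is not transferred.
\item In the last step, weak convergence $dd^cu_k\wedge T\to dd^cu\wedge T$ together with upper semicontinuity of $v$ only gives $\limsup_k\int v\,dd^cu_k\wedge T\le\int v\,dd^cu\wedge T$. That is the opposite of what you need, whatever the order of limits.
\end{itemize}
The missing idea is this. The function $u_k-u\ge0$ is supported in the relatively compact set $\{u\le-k\}$. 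So for every negative psh $\phi$, with smooth psh regularizations $\phi_\delta\searrow\phi$ near that set,
$\int\phi_\delta\,dd^c(u_k-u)\wedge T=\int(u_k-u)\,dd^c\phi_\delta\wedge T\ge0$.
Letting $\delta\to0$ gives $\int\phi\,dd^cu_k\wedge T\ge\int\phi\,dd^cu\wedge T$. This one inequality closes all three holes:
\begin{itemize}
\item With $\phi=v_j$ it gives your uniform bound $\int(-u_k)\,dd^cv_j\wedge T\le\int(-v)\,dd^cu\wedge T$.
\item Apply it to the pair $Au\le\max(Au,\rho-1)$, with $\rho=|z|^2-R^2$ and $A$ large so that $\max(Au,\rho-1)=\rho-1$ near $K$. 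This gives $\int_K(-v)\,dd^c\rho\wedge T\le A\int(-v)\,dd^cu\wedge T<\infty$, i.e.\ $v\in L^1_{loc}(\|T\|)$.
\item With $\phi=v$ it gives $\int v\,dd^cu\wedge T\le\int v\,dd^cu_k\wedge T\le\int u_k\,dd^cv\wedge T$. The second inequality comes from your ordering, letting $j\to\infty$ first. Monotone convergence in $k$ then finishes.
\end{itemize}

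Finally, your claim $\limsup_k\int u_k\,dd^cv_k\wedge T\le\int u\,dd^cv\wedge T$ does not follow from upper semicontinuity of $u$, since $u_k\ge u$. You must first bound by $u_m$ with $m\le k$, and only then let $m\to\infty$.
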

As a consequence, if $\lim_{z\to\partial\Omega}u(z)=\lim_{z\to\partial\Omega}v(z)=0$ and $T=dd^cw_1\wedge...\wedge dd^c w_{n-1}$
with $w_1,...,w_{n-1}\in\psh\cap L^{\infty}(\Omega)$ then
	$$\int_{\Omega}vdd^cu\wedge T=\int_{\Omega}udd^cv\wedge T.$$
	This integration by parts formula is an essential tool in \cite{Ceg04}, especially in the study of the convergence of (mixed)
	Monge-Amp\`ere operators.
	
	 Our goal is to extend the above result to the case where the boundary conditions of $u$ and $v$ are not necessarily equal to zero.
	 The main result of this paper is as follows:
\begin{theorem}\label{main}
		Let $u, U, v$ and $V$ be negative plurisubharmonic functions on $\Omega$ satisfying the following condition:
		\begin{itemize}
			\item [(i)] $U$ and $V$ are bounded;
			\item [(ii)] $u\leq U$ and $v\leq V$;
			\item [(iii)] for every $z_0\in\partial\Omega$,
			$$\lim_{z\to z_0}(u-U)(z)=\lim_{z\to z_0}(v-V)(z)=0.$$
		\end{itemize}
	Suppose that $w_1,\dots, w_{n-1}\in\mathcal{E}(\Omega)$ and denote
	$T=dd^cw_1\wedge\dots\wedge dd^c w_{n-1}$.
	Then
	\begin{center}
		\begin{align*}
				\int_{\Omega}(u-U) dd^cv\wedge T
			+\int_{\Omega}(v-V) dd^cU\wedge T
			&=\int_{\Omega}(v-V)dd^cu\wedge T\\
			&+\int_{\Omega}(u-U) dd^cV\wedge T.
		\end{align*}
	\end{center}	
\end{theorem}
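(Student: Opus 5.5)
We plan to reduce Theorem \ref{main} to the case of zero boundary values via truncation, and to handle the general case by monotone approximation. Interpreting the integrals requires some care. The functions $u,v$ need not be bounded, so $dd^cu\wedge T$ and $dd^cv\wedge T$ must be understood as the nonpluripolar (or Cegrell) products. Our reading of the hypotheses is that $u-U\to 0$ at $\partial\Omega$ forces $u$ to be bounded near $\partial\Omega$, since $U$ is bounded. Hence $u$ is unbounded only on a relatively compact set $K\Subset\Omega$, and there $\max(u,-M)$ coincides with $u$ near $\partial\Omega$ for $M$ large. Near $\partial\Omega$ everything is therefore bounded, and $dd^cu\wedge T$ is well defined on $\Omega$ by the local theory: $u$ is bounded outside a compact set, and $w_j\in\mathcal E$.

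\textbf{Step 1: bounded case.} Assume first that $u,v$ are bounded. Pick an exhaustion function $\rho\in\mathcal E_0(\Omega)$. For $\epsilon>0$ consider the compactly supported ``error'' $h_\epsilon=\max(u-U,-\epsilon)\,$-type truncations. A cleaner route is to write $u-U$ as a difference of functions in $\mathcal F$ or $\mathcal E_0$ plus something with compact support. Concretely, for $\epsilon>0$ set
\[
u_\epsilon=\max(u,U-\epsilon).
\]
Then $u_\epsilon=U-\epsilon$ near $\partial\Omega$ and $u_\epsilon\searrow u$ as $\epsilon\to0$. The function $\phi_\epsilon=u_\epsilon-U+\epsilon$ is a difference of bounded psh functions with compact support. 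For such compactly supported differences, Stokes' theorem gives symmetric integration by parts:
\[
\int \phi\, dd^c\psi\wedge T=\int\psi\, dd^c\phi\wedge T .
\]
This holds first for bounded $w_j$, and extends to $w_j\in\mathcal E$ by Cegrell's convergence theorems applied to $\max(w_j,-k)$, using that $\phi$ is bounded and compactly supported. Applying this with $\phi=u_\epsilon-U$ and $\psi=v_\delta-V$ (both compactly supported modulo constants) and expanding $dd^c(u_\epsilon-U)$ and $dd^c(v_\delta-V)$ yields exactly the stated identity for $u_\epsilon,U+\text{const},v_\delta,V+\text{const}$. The constants cancel because $\int dd^c\psi\wedge T=0$ for compactly supported $\psi$.

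\textbf{Step 2: passage to the limit.} We let $\delta,\epsilon\to0$. The measures $dd^cu_\epsilon\wedge T$ converge weakly to $dd^cu\wedge T$ by decreasing convergence. The integrands $u_\epsilon-U$ decrease to $u-U$, but they are not continuous, so weak convergence alone does not suffice. We handle this as in Cegrell's proof of the zero-boundary case, using monotone convergence together with the symmetric form of the identity. Each integral is split into a part on a fixed compact $L$ containing $K$ and a part on $\Omega\setminus L$. On $\Omega\setminus L$, for $\epsilon$ small the functions agree with bounded functions independent of $\epsilon$ near the boundary. On $L$ the integrand $u-U$ is bounded below by $u-U\ge u$ restricted... and here unboundedness matters. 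To deal with it, the first reduction is to treat the general (unbounded on $K$) case by replacing $u$ with $\max(u,-M)$, which satisfies the same hypotheses and is bounded, and then letting $M\to\infty$.

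\textbf{Main obstacle.} The hardest step is the limit $M\to\infty$, or $\epsilon\to0$ on the compact part. There the functions $(u_M-U)$ are not continuous, and the measures $dd^cv\wedge T$ may charge pluripolar sets, or the integrals may be infinite. Our first attempt is to use upper semicontinuity: the functions $u_M-U$ are quasicontinuous, and since $T$ has finite mass and the relevant measures put no mass on pluripolar sets, we get convergence in capacity. If an integral diverges to $-\infty$, we intend to show that both sides diverge, by comparing with Cegrell's inequality (Theorem 1.1) applied to $u-U$ plus a multiple of the exhaustion function. This gives the identity in $[-\infty,0]$ on both sides.
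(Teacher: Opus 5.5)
\textbf{There is a genuine gap.} Your overall shape is the paper's: symmetric integration by parts for compactly supported differences, replacing $w_j$ by $\max(w_j,-k)$, then passing to limits. However, Step~1 is set up backwards, and the limit passage is missing its key idea.

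Since $u-U\to0$ at $\partial\Omega$, you have $u>U-\epsilon$ near $\partial\Omega$. So $u_\epsilon=\max(u,U-\epsilon)$ equals $u$ there, not $U-\epsilon$. As $\epsilon\to0$ it increases to $U$; it does not decrease to $u$. And $u_\epsilon-U+\epsilon=\max(u-U+\epsilon,0)$ tends to $\epsilon$ at $\partial\Omega$, so it is not compactly supported. The compactly supported functions are $u-u_\epsilon$ and $v-v_\delta$. Integrating by parts with these gives the stated identity with $(U,V)$ replaced by $(u_\epsilon,v_\delta)$, with no constants arising, and you must then let $u_\epsilon\nearrow U$, $v_\delta\nearrow V$. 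Also, removing the truncation $\max(w_j,-k)$ against the discontinuous integrand $u-u_\epsilon$ needs convergence against bounded plurisubharmonic weights, as in Theorem~\ref{the.P10extend}; weak convergence alone is not enough.

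\textbf{The limit $\epsilon\to0$.} The hard term is $\int_\Omega(v-V)\,dd^cu_\epsilon\wedge T$. Its integrand is neither continuous nor compactly supported. Moreover $\{u_\epsilon\neq u\}=\{u-U<-\epsilon\}$ spreads out to $\partial\Omega$ as $\epsilon\to0$. So your claim that off a fixed compact $L$ everything is eventually independent of $\epsilon$ is false for this truncation, and weak convergence of $dd^cu_\epsilon\wedge T$ does not control the integral. What is actually available is Corollary~\ref{cor.P10}, using $v-V\leq0$, cut-offs and $\max(v,-k)$. It gives only the one-sided bound $\limsup_{\epsilon\to0}\int_\Omega(v-V)\,dd^cu_\epsilon\wedge T\leq\int_\Omega(v-V)\,dd^cU\wedge T$.

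The missing idea is that this bound suffices. Combine it with monotone convergence in $\int_\Omega(u-u_\epsilon)\,dd^cv\wedge T$ and $\int_\Omega(u-u_\epsilon)\,dd^cV\wedge T$. This yields ``left side $\geq$ right side'', and the reverse inequality follows by interchanging $(u,U)$ and $(v,V)$.

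\textbf{Your proposed substitute.} It rests on false premises:
\begin{itemize}
\item $T$ need not have finite mass, since $w_j\in\mathcal{E}$, not $\mathcal{F}$.
\item $dd^cv\wedge T$ may charge pluripolar sets. On the unit ball, $v=w_1=\dots=w_{n-1}=\log|z|$ with $V=0$ gives a Dirac mass at $0$. So quasicontinuity and convergence in capacity do not control these integrals.
\item Theorem~1.1 cannot be applied to $u-U$, which is not plurisubharmonic.
\end{itemize}

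\textbf{The unbounded case.} Here your splitting does work, but only for large truncations $u_k=\max(u,U-2^k)$, which agree with $u$ off a fixed compact set. Even then it handles only the single term $\int_\Omega(v_l-V)\,dd^cu_k\wedge T$, via Theorem~\ref{the.P10extend} on the compact part. The paper disposes of the remaining terms through the monotonicity $u_m-U\geq u_k-U\geq u-U$ for $m\leq k$. It again proves only one inequality and concludes by symmetry.
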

Here, $\mathcal{E}(\Omega)$ is a class introduced by Cegrell \cite{Ceg04} which coincides with the set of negative plurisubharmonic
 functions belonging to the domain of definition of the  Monge-Amp\`ere operator. We briefly recall some important properties of this class in the Preliminaries.
\section{Preliminaries}
 In the classical sense, if $u$ is a smooth plurisubharmonic function on a domain of $\C^n$ then its
Monge-Amp\`ere operator $(dd^cu)^n$ is equal to $c_n\det (u_{\alpha\bar{\beta}})dV$, 
where $dV$ is the standard volume form and $c_n>0$ depends only on $n$. Building on the foundational work of Bedford and Taylor \cite{BT76, BT82}, one can define the Monge-Amp\`ere operator for bounded plurisubharmonic functions so that $(dd^cu)^n$ is a positive measure. Moreover, it satisfies the following property: if $u_j$ is a sequence of bounded plurisubharmonic 
functions decreasing to $u$ on an open set $U\subset\Omega$ then $(dd^cu_j)^n$ converges weakly to  $(dd^c u)^n$ on $U$.
This leads to the following notion: the Monge-Amp\`ere operator of a plurisubharmonic function $u$ is said to be well defined if there exists a positive measure $(dd^cu)^n$ satisfying the above convergence property.
 We denote by $\mathcal{D}(\Omega)$ the class of plurisubharmonic functions for which the Monge-Amp\`ere operator is well defined.
 It was shown that $PSH\cap L^{\infty}_{loc}\varsubsetneq \mathcal{D}
 \varsubsetneq PSH$. Characterizations of $\mathcal{D}$ were given by Cegrell \cite{Ceg04} (for the case of hyperconvex domains)
 and B\l ocki \cite{Blo06}.
 
In \cite{Ceg98, Ceg04}, Cegrell has introduced the following classes of plurisubharmonic functions on hyperconvex domains:
\begin{center}
	$\mathcal{E}_0(\Omega)=\{u\in PSH^-(\Omega)\cap L^\infty (\Omega): \lim_{z\to\partial\Omega}u(z)=0, \int_\Omega (dd^cu)^n<\infty\},$\\ 
	$\mathcal{F}(\Omega)=\{u\in PSH^-(\Omega): \exists \, \{u_j\}\subset \E_0(\Omega), \; u_j\searrow u, \, \sup_{j}\int_{\Omega} (dd^cu_j)^n<\infty\}$ ,\\
	$\mathcal{E}(\Omega)
	=\{ u\in PSH^-(\Omega):\forall K\Subset\Omega, \exists u_K\in\mathcal{F}(\Omega)
	\mbox{ such that } u_K=u \mbox{ on } K\}.$
\end{center}
By \cite{Ceg04, Blo06}, $\mathcal{E}(\Omega)$ coincides with the class of negative plurisubharmonic functions for which the Monge-Amp\`ere operator is well defined. Moreover,
\begin{itemize}
	\item if $u, v\in\mathcal{E}(\Omega)$ then $u+v\in \mathcal{E}(\Omega)$;
	\item  if $u\in \mathcal{E}(\Omega)$ then $w\in \mathcal{E}(\Omega)$ for every $w\in\pshn (\Omega)$ with $w\geq u$.
\end{itemize}
The following lemma provides a uniform estimate for the total mass of the Monge-Amp\`ere measure on compact subsets of $\Omega$.
\begin{lemma}\label{lem1.pre}
	Let $u\in\mathcal{E}(\Omega)$.
	 Then, for every compact subset $K$ of $\Omega$, there exists a constant $C>0$ such that
	 $$\int_K(dd^cw)^n\leq C,$$
	 for every $w\in\mathcal{E} (\Omega)$ with $w\geq u$.
\end{lemma}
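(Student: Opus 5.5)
The plan is to reduce to a fixed compact set by gluing with a function from $\mathcal{F}(\Omega)$, and then pass to a limit using monotonicity. Fix $u\in\mathcal{E}(\Omega)$ and a compact $K\Subset\Omega$. By the definition of $\mathcal{E}(\Omega)$ there is $u_K\in\mathcal{F}(\Omega)$ with $u_K=u$ on $K$. Every $w\in\mathcal{E}(\Omega)$ with $w\geq u$ satisfies $w\geq u\geq u_K$ on $K$. The function
$$\tilde w=\max(w,u_K)\ \text{on }\Omega\setminus K,\qquad \tilde w=w\ \text{on }K$$
is not obviously plurisubharmonic, so I would use a slightly different construction.

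Choose $K\Subset K'\Subset\Omega$ with $K'$ a neighbourhood of $K$, and take $u_{K'}\in\mathcal{F}(\Omega)$ with $u_{K'}=u$ on $K'$. For $w\geq u$ in $\mathcal{E}(\Omega)$, set $\tilde w=\max(w,u_{K'})$. This is plurisubharmonic and negative, and $\tilde w\geq u_{K'}$, so $\tilde w\in\mathcal{F}(\Omega)$, since $\mathcal{F}$ is stable under taking larger negative psh functions. On $K'$ we have $w\geq u=u_{K'}$, hence $\tilde w=w$ on the open set $\mathrm{int}K'\supset K$. Since the Monge--Amp\`ere operator is local on open sets (plurifine locality, or simply locality on open sets for $\mathcal{E}$), it follows that
$$\int_K(dd^cw)^n=\int_K(dd^c\tilde w)^n\le\int_\Omega(dd^c\tilde w)^n.$$

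It remains to bound $\int_\Omega(dd^c\tilde w)^n$ uniformly. Here I would use Cegrell's comparison principle for the class $\mathcal{F}$: if $\varphi,\psi\in\mathcal{F}(\Omega)$ and $\varphi\le\psi$, then $\int_\Omega(dd^c\psi)^n\le\int_\Omega(dd^c\varphi)^n$. This is proved by approximating with decreasing sequences in $\mathcal{E}_0$ and integrating by parts against an exhaustion function, i.e.\ using Theorem 3.2 of \cite{Ceg04} quoted above. Applying it with $\varphi=u_{K'}$ and $\psi=\tilde w$ gives
$$\int_\Omega(dd^c\tilde w)^n\le\int_\Omega(dd^cu_{K'})^n=:C<\infty,$$
and $C$ is independent of $w$, which proves the lemma.

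The main obstacle is justifying this monotonicity of total mass in $\mathcal{F}$, which is a result of \cite{Ceg04}; if it cannot be cited directly, I would prove it for $\mathcal{E}_0$ via integration by parts with $h\in\mathcal{E}_0$, $h\nearrow -1$, and pass to limits. Locality of $(dd^c\cdot)^n$ on open sets where two functions of $\mathcal{E}$ coincide is standard.
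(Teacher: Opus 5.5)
Your proof is correct and follows essentially the same route as the paper. Both arguments take a function in $\mathcal{F}(\Omega)$ equal to $u$ on a neighbourhood of $K$, set $\tilde w=\max\{w,\tilde u\}$, use locality on that neighbourhood, and bound $\int_\Omega(dd^c\tilde w)^n$ by $\int_\Omega(dd^c\tilde u)^n$ via monotonicity of total mass. The only difference is the reference for that last step: the paper uses \cite[Lemma 3.3]{ACCP09} through $\mathcal{F}\subset\mathcal{N}$, while you appeal to Cegrell's version of the same monotonicity for $\mathcal{F}$.
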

\begin{proof}
	By the definition of the class $\mathcal{E}$, there exists a function $\tilde{u}\in\mathcal{F}(\Omega)$ such that
	$u=\tilde{u}$ on a neighborhood $W$ of $K$. Note that $\int_{\Omega}(dd^c\tilde{u})^n<\infty$ (by \cite[Proposition 5.1]{Ceg04}).
	
	Put $\tilde{w}=\max\{w, \tilde{u}\}$. Since $w\geq u=\tilde{u}$ on $W$, we have $w=\tilde{w}$ on $W$.
	Applying \cite[Lemma 3.3]{ACCP09} to the case where $H=0$ and $\varphi=-1$ (use the fact $\mathcal{F}\subset\mathcal{N}$), we have
	$$\int_K(dd^cw)^n=\int_K(dd^c\tilde{w})^n\leq \int_{\Omega}(dd^c\tilde{w})^n\leq \int_{\Omega}(dd^c\tilde{u})^n:=C<\infty.$$
	The proof is complete.	
\end{proof}
In the following, we recall some convergence properties of the Monge-Amp\`ere operator on $\mathcal{E}(\Omega)$.
\begin{theorem}[\cite{P10}, Theorem 3.1]\label{the.P10}
	Let $u_j, v_j, w\in\mathcal{E}(\Omega)$ be such that $u_j, v_j\geq w$ for every $j\in\mathbb{Z}^+$. Assume that $|u_j-v_j|$ 
	converges to $0$ in capacity as $j\rightarrow\infty$. Then, $\varphi ((dd^c u_j)^n-(dd^cv_j)^n)$ converges to $0$ in the weak-topology
	of measures for every $\varphi\in\psh \cap L^{\infty}(\Omega)$.
\end{theorem}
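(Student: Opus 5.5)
The statement is Theorem~3.1 of \cite{P10}, and the plan is to reduce it to bounded functions by truncation, integrate by parts, and use convergence in capacity. Fix $\varphi\in\psh\cap L^\infty(\Omega)$ and a test function $\chi\in C^\infty_c(\Omega)$ with support in a compact $K$. We must show that
$$\int\chi\varphi\,(dd^cu_j)^n-\int\chi\varphi\,(dd^cv_j)^n\to0 .$$
Subtracting a constant from $\varphi$ changes nothing, since the masses on $K$ are uniformly bounded by Lemma~\ref{lem1.pre}. So we may assume $-1\le\varphi\le0$.

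\textbf{Step 1: localize.} Following the proof of Lemma~\ref{lem1.pre}, replace $w$ by $\tilde w\in\mathcal F(\Omega)$ equal to $w$ near $K$. Replace $u_j,v_j$ by $\max\{u_j,\tilde w\}$ and $\max\{v_j,\tilde w\}$. This leaves the measures near $K$ unchanged. It keeps $|u_j-v_j|\to0$ in capacity, because $\max$ is $1$-Lipschitz. Hence we may assume $u_j,v_j\ge w$ with $w\in\mathcal F(\Omega)$.

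\textbf{Step 2: truncate.} Set $u_j^k=\max\{u_j,-k\}$ and $v_j^k=\max\{v_j,-k\}$. Since $\{u_j^k\neq u_j\}\subset\{w<-k\}$ and the measures agree on the open set $\{u_j>-k\}$, the error is controlled by a tail. Precisely, it is bounded by the $(dd^cu_j)^n$- and $(dd^cu_j^k)^n$-masses of $\{u_j\le -k\}\cap K$, and similarly for $v_j$. The key uniform estimate I would aim for is
$$\sup_j\int_{\{u_j\le -k\}}(dd^cu_j)^n\le \int_{\{w\le -k/2\}}\!\!(dd^c w)^n+o(1)\quad(k\to\infty),$$
together with the analogous bound for $(dd^cu_j^k)^n$ on $\{u_j\le-k\}$. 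I would obtain this from the comparison-principle argument in $\mathcal F$ (as in Cegrell's and \cite{ACCP09}'s estimates on $\{u<-k\}$), using $u_j\ge w$. The right-hand side tends to $0$ because $w\in\mathcal F$ has finite total mass and $(dd^cw)^n$ charges no pluripolar set $\{w=-\infty\}$. I expect this uniform tail control to be the main obstacle.

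\textbf{Step 3: the bounded case, for fixed $k$.} Write
$$(dd^cu_j^k)^n-(dd^cv_j^k)^n=dd^c(u_j^k-v_j^k)\wedge S_j,\qquad S_j=\sum_{i=0}^{n-1}(dd^cu_j^k)^i\wedge(dd^cv_j^k)^{n-1-i}.$$
Integrating by parts against the compactly supported function $\chi\varphi$ gives
$$\int(u_j^k-v_j^k)\,dd^c(\chi\varphi)\wedge S_j .$$
Here $dd^c(\chi\varphi)$ expands into $\chi\,dd^c\varphi$, the cross terms $d\chi\wedge d^c\varphi$ and $d\varphi\wedge d^c\chi$, and $\varphi\,dd^c\chi$. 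The cross terms are handled by Cauchy--Schwarz with the bounded energy $\int d\varphi\wedge d^c\varphi\wedge S_j$ on $K$.

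To estimate the integral, split $K$ into $\{|u_j^k-v_j^k|<\varepsilon\}$ and its complement. On the first set the contribution is $O(\varepsilon)$ times masses that are uniformly bounded, since all functions involved lie between $-k$ and $0$ near $K$ (Chern--Levine--Nirenberg). On the complement, $|u_j^k-v_j^k|\le 2k$. Moreover, measures of the form $dd^c\varphi\wedge S_j$, with all potentials bounded by $k$, are dominated by $C_k\,\capK(\cdot)$. The capacity of $\{|u_j-v_j|\ge\varepsilon\}\cap K$ tends to $0$ by hypothesis. The cross terms are handled the same way, after Cauchy--Schwarz.

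\textbf{Step 4: conclude.} Letting $j\to\infty$, then $\varepsilon\to0$, then $k\to\infty$ using Step~2, proves the claim.
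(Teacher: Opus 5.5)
The paper does not prove this statement; it is quoted from \cite{P10}, so I assess your argument on its own. Steps~1 and~3 are sound. The reduction to $w\in\mathcal F(\Omega)$ is correct. For fixed $k$, the integration by parts, the Cauchy--Schwarz treatment of the cross terms and the domination of the relevant measures by $C_k\,\capK$ give the bounded case. The gap is in Step~2, and it breaks the strategy, not just a detail. Your claim that $(dd^cw)^n$ charges no pluripolar set is false in $\mathcal F(\Omega)$. On the unit ball, $w=\log|z|\in\mathcal F$ and $(dd^cw)^n=c\,\delta_0$ with $c>0$. In general, for $w\in\mathcal E$ the measure $(dd^cw)^n$ may carry mass on $\{w=-\infty\}$. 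So even if you proved your comparison inequality, its right-hand side decreases to $\int_{\{w=-\infty\}}(dd^cw)^n$, which need not vanish.

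Here is a concrete case. On the ball take $u_j=w=\log|z|$ and $v_j=\max\{\log|z|,-j\}$; all hypotheses hold. For $j>k$ we have $u_j^k=v_j^k$, so Step~3 contributes nothing and everything rests on the two truncation errors. The sublevel masses you use to bound them equal $c$ for every $k$, so your estimate never becomes small. The actual errors are
\[
c\Big(\chi(0)\varphi(0)-\int\chi\varphi\,d\sigma_k\Big)\quad\text{and}\quad c\Big(\int\chi\varphi\,d\sigma_j-\int\chi\varphi\,d\sigma_k\Big),
\]
where $\sigma_k$ is the normalized surface measure on $\{|z|=e^{-k}\}$. They tend to $0$ only because $\varphi$ is upper semicontinuous and satisfies the sub-mean value inequality. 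Replace $\varphi$ by the bounded, upper semicontinuous function $\mathbf{1}_{\{0\}}$: the first error is then $c\chi(0)$ for every $k$, and the conclusion of the theorem itself fails. Hence no bound on the truncation error that uses only $|\varphi|\le 1$ and masses of $\{u_j\le -k\}$ can work. You are missing an estimate of $\int\chi\varphi\,[(dd^cu_j)^n-(dd^cu_j^k)^n]$ that uses the plurisubharmonicity of $\varphi$ on the mass carried by $\{u_j=-\infty\}$, made uniform in $j$ through $u_j,v_j\ge w$. One possible ingredient is Cegrell's monotonicity, $\int\varphi(dd^cu)^n\le\int\varphi(dd^cv)^n$ for $u\le v$ in $\mathcal F$ and $\varphi\in\pshn(\Omega)$, which gives the truncation error a sign, combined with upper semicontinuity. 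This is where the real work lies, and your proposal does not supply it.
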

Here, we say that $|u_j-v_j|$ converges to $0$ in capacity if 
$$\lim_{j\to\infty}\capK(\{|u_j-v_j|>\epsilon\}, \Omega)=0,$$
for every $\epsilon>0$, where
$$\capK (E, \Omega)=\left\{\int_{E}(dd^cw)^n: w\in\psh(\Omega), 0\leq w\leq 1\right\},$$
for every Borel subset $E$ of $\Omega$. It is classical that if a sequence $(u_j)$ of plurisubharmonic functions converges monotonically
to a plurisubharmonic function $u$, then $|u_j-u|$ converges to $0$ in capacity (see, for example, \cite{Kli91}).

The above theorem can be slightly extended as follows
\begin{theorem}\label{the.P10extend}
	Let $u_j^p, v_j^p\in\mathcal{E}(\Omega)$, $1\leq p\leq n$, such that
	\begin{itemize}
		\item [(i)] there exists $w\in\mathcal{E}(\Omega)$ such that $u_j^p, v_j^p\geq w$ for every $1\leq p\leq n$ and
		for all $j\in\mathbb{Z}^+$;
		\item [(ii)] $|u_j-v_j|$ 
		converges to $0$ in capacity as $j\rightarrow\infty$ for every $1\leq p\leq n$. Assume that
		$\varphi$ is a bounded plurisubharmonic function on $\Omega$. Then, 
		$$\lim_{j\to\infty}\int_{\Omega}\chi\,\varphi\,((dd^c u_j^1\wedge...\wedge dd^c u_j^n)-(dd^c v_j^1\wedge...\wedge dd^c v_j^n))
		=0,$$
		for every test function $\chi\in C_c(\Omega)$.
	\end{itemize}
\end{theorem}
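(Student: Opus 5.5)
The statement to prove is Theorem \ref{the.P10extend}, the mixed version of Theorem \ref{the.P10}; note that in (ii) the intended hypothesis is that $|u_j^p-v_j^p|\to 0$ in capacity for each $p$. My plan is to reduce to Theorem \ref{the.P10} by polarization, replacing one factor at a time by a telescoping argument.

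\textbf{Telescoping.} Write the difference of the two wedge products as
$$\sum_{p=1}^n dd^cv_j^1\wedge\dots\wedge dd^cv_j^{p-1}\wedge\bigl(dd^cu_j^p-dd^cv_j^p\bigr)\wedge dd^cu_j^{p+1}\wedge\dots\wedge dd^cu_j^n.$$
Each term is a difference of two mixed Monge--Amp\`ere measures. The two measures share all but one entry, and the differing entries converge together in capacity.

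\textbf{Polarization.} For $a_1,\dots,a_n\in\mathcal{E}(\Omega)$, the mixed measure $dd^ca_1\wedge\dots\wedge dd^ca_n$ is a finite linear combination, with universal coefficients, of measures $(dd^c(\sum_{p\in S}a_p))^n$ over subsets $S\subset\{1,\dots,n\}$. This is the standard polarization identity, and it holds on $\mathcal{E}$ because the operator is multilinear there. It therefore suffices to show that, for every fixed $S$,
$$\int_\Omega\chi\varphi\Bigl[\Bigl(dd^c\sum_{p\in S}u_j^p\Bigr)^n-\Bigl(dd^c\sum_{p\in S}v_j^p\Bigr)^n\Bigr]\to 0.$$
(One could equally polarize the telescoped terms; directly polarizing both full products is simpler.) Set $U_j=\sum_{p\in S}u_j^p$ and $V_j=\sum_{p\in S}v_j^p$. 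Both lie in $\mathcal{E}(\Omega)$ and satisfy $U_j,V_j\geq |S|\,w=:W\in\mathcal{E}(\Omega)$. Moreover
$$\{|U_j-V_j|>\epsilon\}\subset\bigcup_{p\in S}\{|u_j^p-v_j^p|>\epsilon/n\},$$
so by subadditivity of capacity $|U_j-V_j|\to0$ in capacity. Theorem \ref{the.P10} then gives $\varphi((dd^cU_j)^n-(dd^cV_j)^n)\to0$ weakly. Testing against $\chi\in C_c(\Omega)$ yields the claim, and summing over $S$ finishes the proof.

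\textbf{Main obstacle.} The delicate point is justifying polarization for unbounded functions in $\mathcal{E}$. I would handle it by approximation: take bounded decreasing approximations, e.g.\ $\max(a_p,-k)$, for which the identity is classical (Bedford--Taylor). Then pass to the limit $k\to\infty$ using the continuity of mixed Monge--Amp\`ere operators along decreasing sequences in $\mathcal{E}$ (Cegrell \cite{Ceg04}), which guarantees that both sides converge weakly. A secondary point is that Theorem \ref{the.P10} requires $\varphi\in\psh\cap L^\infty$; here $\varphi$ is bounded plurisubharmonic, which matches exactly, so no further approximation of $\varphi$ is needed.
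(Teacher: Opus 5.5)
Your proof is correct. It extracts the mixed term differently from the paper.

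The paper works with the whole family $P_j(t)=\int_\Omega\chi\varphi\bigl((dd^c\sum_p t_pu_j^p)^n-(dd^c\sum_p t_pv_j^p)^n\bigr)$ for $t\in(\mathbb{R}^+)^n$:
\begin{itemize}
\item Theorem~\ref{the.P10} gives $P_j\to0$ pointwise.
\item Lemma~\ref{lem1.pre} bounds all coefficients $a_{\alpha,j}$ uniformly in $j$.
\item Arzel\`a--Ascoli upgrades this to uniform convergence on $[0,1]^n$.
\item Equivalence of norms on homogeneous polynomials then yields $a_{\alpha,j}\to0$ for every $\alpha$.
\end{itemize}

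You evaluate only at the vertices $t=\mathbf{1}_S\in\{0,1\}^n$. The inclusion--exclusion identity $\sum_S(-1)^{n-|S|}P_j(\mathbf{1}_S)=n!\,a_{(1,\dots,1),j}$ isolates exactly the coefficient you need as a fixed finite combination of values. Each value tends to $0$ by one application of Theorem~\ref{the.P10}.

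So you need no mass bound, no compactness and no norm-equivalence step. Your identity in fact shows that the paper's uniform bound (and hence Lemma~\ref{lem1.pre}) is superfluous here. Pointwise convergence of $P_j$ at finitely many suitable points already forces the relevant coefficient to converge.

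The paper's route does give $a_{\alpha,j}\to0$ for all $\alpha$ at once. But those are just instances of the statement with repeated entries, so nothing is lost.

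Both proofs rest on the same input: multilinearity and symmetry of the mixed Monge--Amp\`ere operator on $\mathcal{E}(\Omega)$. The paper uses it implicitly when it expands $P_j$ as a polynomial. So your ``main obstacle'' is no heavier for you than for the paper. Your justification by bounded decreasing approximations plus Cegrell's monotone continuity is adequate.

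The telescoping paragraph is never used and can be deleted.
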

\begin{proof}
	By decomposing $\chi=\max\{\chi,0\}-\max\{-\chi, 0\}$, it suffices to consider the case where $\chi\geq 0$. 
	
	For every multi-index $\alpha=(\alpha_1,...\alpha_n)\in\mathbb{N}^n$ with $|\alpha|=\sum_{p=1}^n\alpha_p=n$, we denote
	$$a_{\alpha, j}=\int_{\Omega}\chi\,\varphi\,((dd^c u_j^1)^{\alpha_1}\wedge...\wedge (dd^c u_j^n)^{\alpha_n}
	-(dd^c v_j^1)^{\alpha_1}\wedge...\wedge (dd^c v_j^n)^{\alpha_n}),$$
	for every $j\in\mathbb{Z}^+$.

	Put $K=\supp \chi$ and $M=\sup_{\Omega}|\varphi|$. By Lemma \ref{lem1.pre}, there exists $C_1>0$ such that
	$$\int_{\Omega}\chi\,|\varphi|\,(dd^c(u_j^1+...+u_j^n))^n\leq M\int_K(dd^c(u_j^1+...+u_j^n))^n\leq C_1,$$
	and
		$$\int_{\Omega}\chi\,|\varphi|\,(dd^c(v_j^1+...+v_j^n))^n\leq M\int_K(dd^c(v_j^1+...+v_j^n))^n\leq C_1,$$
		for every $j\in\mathbb{Z}^+$. Then, we have
	\begin{equation}\label{eq1.P10extend}
	|a_{\alpha, j}|\leq 2C_1,	
	\end{equation}
	for every $j, \alpha$.
	
	For $t=(t_1,...,t_n)\in (\mathbb{R}^+)^n$, we define
	$$P_j(t)=\int_{\Omega}\chi\,\varphi\,((dd^c(t_1u_j^1+...+t_nu_j^n))^n-(dd^c(t_1v_j^1+...+t_nv_j^n))^n)
		=n!\sum_{|\alpha|=n}\frac{a_{\alpha, j}}{\alpha!}t^{\alpha},$$
	for every $j\in\mathbb{Z}^+$.
	
	By Theorem \ref{the.P10}, we have $P_j$ converges pointwise to $0$ as $j\rightarrow\infty$. Moreover, it follows from
	\eqref{eq1.P10extend} that 
	$$\sup_{[0, 1]^n}|\nabla P_j|\leq C_2,$$
	for every $j\in\mathbb{Z}^+$, where $C_2>0$ is a constant. Then, by the Arzel\`a-Ascoli theorem, we have
	$P_j$ converges uniformly to $0$ on $[0, 1]^n$. Since the space of homogeneous polynomials of degree $n$ is finite-dimensional, any two norms on this space are equivalent. Hence, it follows that
	$$\lim_{j\to\infty}a_{\alpha, j}=0,$$
	for every $\alpha$. In particular
		$$\lim_{j\to\infty}\int_{\Omega}\chi\,\varphi\,((dd^c u_j^1\wedge...\wedge dd^c u_j^n)-(dd^c v_j^1\wedge...\wedge dd^c v_j^n))
	=\lim_{j\to\infty}a_{(1,...,1), j}=0.$$
	The proof is complete.
\end{proof}
\begin{corollary}\label{cor.P10}
	Let $w_j^p, w^p\in\mathcal{E}(\Omega)$, $1\leq p\leq n$, such that $w_j^p$ converges monotonically to $w^p$ as $j\rightarrow\infty$.  Suppose that $u$ and $v$ are negative plurisubharmonic functions in $\Omega$ satisfying $u\leq v$. Assume in addition that
	$v$ is bounded, so that $u-v$ is well-defined on the whole $\Omega$. Then
	$$\limsup_{j\to\infty}\int_{\Omega}(u-v)\,dd^c w_j^1\wedge...\wedge dd^c w_j^n
	\leq \int_{\Omega}(u-v)\,dd^c w^1\wedge...\wedge dd^c w^n.$$
	Moreover, if $u$ is also bounded and $u=v$ outside a compact subset of $\Omega$ then
	$$\lim_{j\to\infty}\int_{\Omega}(u-v)\,dd^c w_j^1\wedge...\wedge dd^c w_j^n
	= \int_{\Omega}(u-v)\,dd^c w^1\wedge...\wedge dd^c w^n.$$ 
\end{corollary}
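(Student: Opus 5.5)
The plan is to reduce both assertions to Theorem \ref{the.P10extend}, applied with $u_j^p=w_j^p$, $v_j^p=w^p$ and with $\varphi$ a bounded plurisubharmonic function. Write $\mu_j=dd^c w_j^1\wedge\dots\wedge dd^c w_j^n$ and $\mu=dd^c w^1\wedge\dots\wedge dd^c w^n$.

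\textbf{Step 1: the hypotheses of Theorem \ref{the.P10extend}.}
\begin{enumerate}[(a)]
\item For a common lower bound, first suppose the sequences decrease. Then $w_j^p\geq w^p\geq w:=w^1+\dots+w^n$, because all these functions are negative, and $w\in\mathcal{E}(\Omega)$ since the class is closed under sums.
\item If the sequences increase, take instead $w:=w_1^1+\dots+w_1^n$. If the direction of monotonicity varies with $p$, take $w$ to be the sum of the appropriate minimal elements.
\item Monotone convergence gives $|w_j^p-w^p|\to 0$ in capacity, as recalled after Theorem \ref{the.P10}.
\end{enumerate}
Hence, for every $\chi\in C_c(\Omega)$ and every bounded $\varphi\in\psh(\Omega)$, we get $\int\chi\varphi\,\mu_j\to\int\chi\varphi\,\mu$. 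Moreover, by Lemma \ref{lem1.pre} applied to a sum as in the proof of Theorem \ref{the.P10extend}, the masses $\mu_j(K)$ and $\mu(K)$ are uniformly bounded on each compact set $K$.

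\textbf{Step 2: the second (equality) statement.} Let $K$ be a compact set outside of which $u=v$. Choose $\chi\in C_c(\Omega)$ with $0\leq\chi\leq 1$ and $\chi=1$ on a neighborhood of $K$. Then
$$\int_\Omega(u-v)\mu_j=\int\chi u\,\mu_j-\int\chi v\,\mu_j .$$
Both $u$ and $v$ are bounded plurisubharmonic functions, so Step 1 applies to each term separately and gives the limit $\int\chi(u-v)\mu=\int(u-v)\mu$.

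\textbf{Step 3: the general inequality.} Here $u$ may be unbounded, so I truncate. Set $u_k=\max\{u,v-k\}$, which is bounded and plurisubharmonic with $u\leq u_k\leq v$. Take cutoffs $\chi_m\in C_c(\Omega)$ with $0\leq\chi_m\leq1$ and $\chi_m\nearrow 1$. Since $u-v\leq u_k-v\leq 0$ and $\chi_m\leq 1$, we have
$$\int(u-v)\mu_j\leq\int\chi_m(u_k-v)\mu_j .$$
By Step 1, applied with $\varphi=u_k$ and $\varphi=v$, the right-hand side tends to $\int\chi_m(u_k-v)\mu$ as $j\to\infty$. Therefore
$$\limsup_{j\to\infty}\int(u-v)\mu_j\leq\int\chi_m(u_k-v)\mu\quad\text{for all }k,m.$$

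Now let $k\to\infty$. The functions $\chi_m(u_k-v)$ are nonpositive, decrease to $\chi_m(u-v)$, and are supported in a fixed compact set of finite $\mu$-mass. So the monotone convergence theorem gives the bound $\int\chi_m(u-v)\mu$. Letting $m\to\infty$, the functions $\chi_m(u-v)$ are nonpositive and decrease to $u-v$, so monotone convergence again gives $\int(u-v)\mu$, possibly equal to $-\infty$. This proves the first statement.

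\textbf{Main obstacle.} The only delicate points are in Step 1. One must produce a single lower bound $w\in\mathcal{E}(\Omega)$ that works for both monotone directions. One must also make sure that $\varphi$ always ranges over bounded plurisubharmonic functions, never over the non-plurisubharmonic difference $u-v$; this is exactly why Steps 2 and 3 split the integral into a $u$-term and a $v$-term before passing to the limit.
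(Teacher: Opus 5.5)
Your proof is correct and takes essentially the paper's route. Both arguments truncate $u$ to a bounded plurisubharmonic function, insert a compactly supported cutoff, pass to the limit in $j$ via Theorem \ref{the.P10extend}, and then remove the truncation and the cutoff by monotone convergence. The paper uses $\max\{u,-k\}$ and a single index $k$ for both steps, while you use $\max\{u,v-k\}$ with two indices. Your explicit choice of a common lower bound $w\in\mathcal{E}(\Omega)$, and your splitting of $u_k-v$ into two bounded plurisubharmonic terms, supply details the paper leaves implicit.
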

\begin{proof}
	Let $(\Omega_k)$ be an exhaustion of $\Omega$ by relatively compact open subsets. For each $k\in\mathbb{Z}^+$, let 
	$\chi_k\in C_c(\Omega)$ be a test function satisfying $0\leq\chi_k\leq 1$ and $\Omega_k\subset\supp \chi_k$. By Theorem
	\ref{the.P10extend}, we have
	\begin{equation}\label{eq1.cor2}
		\lim_{j\to\infty}\int_{\Omega}(u_k-v)\chi_k\,dd^c w_j^1\wedge...\wedge dd^c w_j^n
		= \int_{\Omega}(u_k-v)\chi_k\,dd^c w^1\wedge...\wedge dd^c w^n,
	\end{equation}
	for every $k\in\mathbb{Z}^+$, where $u_k=\max\{u, -k\}$.
	
	Assume $k>\sup_{\Omega}|v|$. We have
	$$u-v\leq (u_k-v)\leq (u_k-v)\chi_k\leq (u_k-v)\mathbf{1}_{\Omega_k}\leq 0.$$
	Combining this with \eqref{eq1.cor2}, we get
		$$\limsup_{j\to\infty}\int_{\Omega}(u-v)\,dd^c w_j^1\wedge...\wedge dd^c w_j^n
	\leq\int_{\Omega_k}(u_k-v)\,dd^c w^1\wedge...\wedge dd^c w^n,$$
	for every $k>\sup_{\Omega}|v|$. Letting $k\rightarrow\infty$ and using the Monotone Convergence Theorem, we obtain
		$$\limsup_{j\to\infty}\int_{\Omega}(u-v)\,dd^c w_j^1\wedge...\wedge dd^c w_j^n
	\leq\int_{\Omega}(u-v)\,dd^c w^1\wedge...\wedge dd^c w^n.$$
	Now, we consider the case where $u$ is bounded and $u=v$ outside a compact subset of $\Omega$. For $k\gg 1$, we have 
	 $u_k=u$ on $\Omega$ and $u-v=0$ outside the set $\{\chi_k=1\}$. Then, it follows from \eqref{eq1.cor2} that
	 	$$\lim_{j\to\infty}\int_{\Omega}(u-v)\,dd^c w_j^1\wedge...\wedge dd^c w_j^n
	 = \int_{\Omega}(u-v)\,dd^c w^1\wedge...\wedge dd^c w^n.$$
	The proof is completed.
\end{proof}

\section{Proof of the main theorem}
In order to prove Theorem \ref{main}, we need the following lemma:
\begin{lemma}\label{lem-new.Int.by.parts }
	Let $\Omega\subset\mathbb{C}^n$ be a domain. Let $u_1, u_2, v_1, v_2$ be bounded plurisubharmonic functions in  $\Omega$ such that $u_1=u_2$ and $v_1=v_2$ 
	outside a compact subset $K$ of $\Omega$. Then
	$$\int_{\Omega}(u_1-u_2)dd^c(v_1-v_2)\wedge T=\int_{\Omega}(v_1-v_2)dd^c(u_1-u_2)\wedge T,$$
	where $T=dd^cw_1\wedge\dots\wedge dd^c w_{n-1}$, and $w_1,\dots, w_{n-1}:\Omega\longrightarrow \R$
	are bounded plurisubharmonic functions.
\end{lemma}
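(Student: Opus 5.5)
Set $f=u_1-u_2$ and $g=v_1-v_2$. Both are bounded Borel functions that vanish outside $K$, and each is a difference of bounded plurisubharmonic functions. By Bedford--Taylor theory, $dd^cf\wedge T=dd^cu_1\wedge T-dd^cu_2\wedge T$ is a well-defined signed measure with locally finite total variation, and the same holds for $g$. Since $g$ is bounded Borel and vanishes off $K$, all four integrals $\int g\,dd^cu_i\wedge T$ and $\int f\,dd^cv_i\wedge T$ are finite. So the identity is meaningful, and by bilinearity it reduces to comparing the four terms.

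The plan is to regularize and then pass to the limit.

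\emph{Step 1: localize.} Choose open sets $K\Subset W\Subset W'\Subset\Omega$ and a cutoff $\chi\in C_c^\infty(W')$ with $\chi=1$ on $W$. Both sides only involve $W$, since $f=g=0$ off $K$.

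\emph{Step 2: the smooth case.} If all the functions were smooth, the identity would follow from Stokes' theorem. Indeed, $f\,dd^cg\wedge T-g\,dd^cf\wedge T=d\big(f\,d^cg\wedge T-g\,d^cf\wedge T\big)$, because $T$ is closed and $df\wedge d^cg\wedge T=dg\wedge d^cf\wedge T$. The form in parentheses is compactly supported in $K$, so its integral vanishes.

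\emph{Step 3: regularize.} Take standard convolutions $u_i^\epsilon, v_i^\epsilon, w_k^\epsilon$ defined near $\overline{W'}$. These are smooth and plurisubharmonic and decrease to the original functions. Here is the snag: $u_1^\epsilon-u_2^\epsilon$ need not vanish off $K$. It does vanish off the $\epsilon$-neighbourhood $K_\epsilon$ of $K$, though, because $u_1=u_2$ on $\Omega\setminus K$ makes the convolutions agree at points of distance greater than $\epsilon$ from $K$. For small $\epsilon$ we have $K_\epsilon\Subset W$, so Step 2 applies to the regularized functions with $K$ replaced by $\overline{K_\epsilon}$.

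\emph{Step 4: pass to the limit.} This is the main obstacle. We need
$$\int \chi\, u_i^\epsilon\, dd^cv_j^\epsilon\wedge T^\epsilon\to\int\chi\, u_i\, dd^cv_j\wedge T$$
for decreasing sequences, including the cross terms. Pointwise weak convergence of measures does not suffice, because the integrand is only upper semicontinuous. I would therefore use the Bedford--Taylor convergence theorem: if $\varphi_\epsilon\searrow\varphi$ and $\psi^k_\epsilon\searrow\psi^k$ are bounded plurisubharmonic functions, then
$$\varphi_\epsilon\, dd^c\psi^1_\epsilon\wedge\dots\wedge dd^c\psi^n_\epsilon\to\varphi\, dd^c\psi^1\wedge\dots\wedge dd^c\psi^n$$
weakly. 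Alternatively, I would use Theorem \ref{the.P10extend} applied to the $\chi$-localized bounded functions; monotone convergence gives convergence in capacity, and the multiplier $\chi\varphi$ is handled via the quasicontinuity argument in its proof. Expanding both sides of the smooth identity, each of the resulting terms $\int\chi\, u_i^\epsilon\, dd^cv_j^\epsilon\wedge T^\epsilon$ and $\int\chi\, v_j^\epsilon\, dd^cu_i^\epsilon\wedge T^\epsilon$ converges to its limit.

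\emph{Step 5: remove the cutoff.} Since $\chi=1$ on $W\supset K_\epsilon\cup K$, the cutoff can be inserted on both sides before passing to the limit. Recombining the limits yields the stated identity.

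If the weak-convergence step with an upper semicontinuous multiplier causes trouble, the fallback is to apply that step in the form "$\varphi\, dd^c\psi\wedge\dots$ defined inductively as $dd^c(\varphi\, dd^c\psi\wedge\dots)$". Under that definition, the identity is exactly the symmetry built into the Bedford--Taylor definition, tested against $\chi$, together with Stokes' theorem for currents.
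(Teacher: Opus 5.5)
Your main route is correct and is essentially the paper's proof: you convolve so that the regularized differences vanish off a slightly larger compact set on which the cutoff equals $1$, use Stokes' theorem in the smooth case, and pass to the limit with the Bedford--Taylor monotone convergence theorem for $\varphi_j\,dd^c\psi^1_j\wedge\dots\wedge dd^c\psi^n_j$; the paper applies that theorem to the differences at once, which is the same as your term-by-term version by linearity. The hedges add nothing and are not sound as written: Theorem \ref{the.P10extend} concerns $\mathcal{E}$ on a hyperconvex domain with a fixed multiplier, and its proof is a polarization argument rather than a quasicontinuity one; your ``fallback'' would move $dd^c$ from $f$ onto the nonsmooth $g$ in $\langle dd^c(fT),g\rangle$, which is exactly the step the regularization is needed for.
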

\begin{proof} We use the same method as in the proof of \cite[Theorem 3.1]{Coman97}.
	Let $\chi\in C_c^{\infty}(\Omega)$ be a test function such that $0\leq \chi\leq 1$ in $\Omega$ and $\chi\equiv 1$ in a neighborhood
	of	$K$. Since $u_1=u_2$ and $v_1=v_2$ outside $K$, we have
	$$\int_{\Omega}(1-\chi)(u_1-u_2)dd^c(v_1-v_2)\wedge T=\int_{\Omega}(1-\chi)(v_1-v_2)dd^c(u_1-u_2)\wedge T.$$
	Then it remains to show that
	$$\int_{\Omega}\chi\,(u_1-u_2)dd^c(v_1-v_2)\wedge T=\int_{\Omega}\chi\,(v_1-v_2)dd^c(u_1-u_2)\wedge T.$$
	Let $V$ be an open relative compact subset of $\Omega$ such that $\supp \chi\subset V$. By using convolution,
	we can construct sequences $\{u_1^j\}$, $\{u_2^j\}$, $\{v_1^j\}$, $\{v_2^j\}$ and $\{w_k^j\}$, $k=1,...,n-1$, of smooth
	plurisubharmonic functions in $V$ satisfying the following conditions:
	\begin{itemize}
		\item $u_l^j$ is decreasing to $u_l$ as $j\rightarrow\infty$ for $l=1, 2$;
		\item $v_l^j$ is decreasing to $v_l$ as $j\rightarrow\infty$ for $l=1, 2$;
		\item $u_1^j=u_2^j$ and $v_1^j=v_2^j$ outside a compact subset $\tilde{K}$ of $V$ for every $j\in\Z^+$;
		\item $w_k^j$ is decreasing to $w_k$ as $j\rightarrow\infty$ for every $k=1, 2,..., n-1$.
	\end{itemize}
	Set
	$$T_j=dd^cw_1^j\wedge\dots\wedge dd^c w_{n-1}^j.$$
	By Stokes' theorem, we have
	\begin{equation}\label{eq1lem4.1}
		\int_{V}\chi\,(u_1^j-u_2^j)dd^c(v_1^j-v_2^j)\wedge T_j=\int_{V}\chi\,(v_1^j-v_2^j)dd^c(u_1^j-u_2^j)\wedge T_j,
	\end{equation}
	for every $j\in\Z^+$. Moreover, by \cite[Theorem 3.2]{BT87}, $(u_1^j-u_2^j)dd^c(v_1^j-v_2^j)\wedge T_j$ converges to
	$(u_1-u_2)dd^c(v_1-v_2)\wedge T$ and $(v_1^j-v_2^j)dd^c(u_1^j-u_2^j)\wedge T_j$ converges to
	$(v_1-v_2)dd^c(u_1-u_2)\wedge T$ in the sense of currents of order $0$ on $V$ as $j\rightarrow\infty$. In particular, we have
	\begin{equation}\label{eq2lem4.1}
		\lim_{j\to\infty}\int_{V}\chi\,(u_1^j-u_2^j)dd^c(v_1^j-v_2^j)\wedge T_j=
		\int_V\chi\,(u_1-u_2)dd^c(v_1-v_2)\wedge T,
	\end{equation}
	and
	\begin{equation}\label{eq3lem4.1}
		\lim_{j\to\infty}\int_{V}\chi\,(v_1^j-v_2^j)dd^c(u_1^j-u_2^j)\wedge T_j=\int_V\chi\,(v_1-v_2)dd^c(u_1-u_2)\wedge T.
	\end{equation}
	Combining \eqref{eq1lem4.1}, \eqref{eq2lem4.1} and \eqref{eq3lem4.1}, we get
	$$\int_V\chi\,(u_1-u_2)dd^c(v_1-v_2)\wedge T=\int_V\chi\,(v_1-v_2)dd^c(u_1-u_2)\wedge T.$$
	Since $\supp\chi\subset V$, it follows that
	$$\int_{\Omega}\chi\,(u_1-u_2)dd^c(v_1-v_2)\wedge T=\int_{\Omega}\chi\,(v_1-v_2)dd^c(u_1-u_2)\wedge T.$$
	This completes the proof.
\end{proof}
We now use the above lemma to prove Theorem \ref{main} for the case where $u$ and $v$ are bounded:
\begin{theorem}\label{the.main-bounded}
	Let $u, U, v$ and $V$ be negative plurisubharmonic functions on $\Omega$ satisfying the following condition:
	\begin{itemize}
		\item [(i)] $u, v, U$ and $V$ are bounded;
		\item [(ii)] $u\leq U$ and $v\leq V$;
		\item [(iii)] for every $z_0\in\partial\Omega$,
		$$\lim_{z\to z_0}(u-U)(z)=\lim_{z\to z_0}(v-V)(z)=0.$$
	\end{itemize}
	Suppose that $w_1,\dots, w_{n-1}\in\mathcal{E}(\Omega)$ and denote
	$T=dd^cw_1\wedge\dots\wedge dd^c w_{n-1}$.
	Then
	\begin{center}
		\begin{align*}
			\int_{\Omega}(u-U) dd^cv\wedge T
			+\int_{\Omega}(v-V) dd^cU\wedge T
			&=\int_{\Omega}(v-V)dd^cu\wedge T\\
			&+\int_{\Omega}(u-U) dd^cV\wedge T.
		\end{align*}
	\end{center}	
\end{theorem}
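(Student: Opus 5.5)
Since $u,v,U,V$ are bounded, the identity is equivalent (after cancelling the bounded, finite terms) to the symmetric identity
$$\int_{\Omega}(u-U)\,dd^c(v-V)\wedge T=\int_{\Omega}(v-V)\,dd^c(u-U)\wedge T.$$
Here every integrand is a bounded function against a signed combination of measures. The plan has three steps: reduce to the compactly supported situation of Lemma \ref{lem-new.Int.by.parts }, treat bounded $w_k$, and then pass to general $w_k\in\mathcal{E}(\Omega)$ with Corollary \ref{cor.P10}.

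\textbf{Step 1 (truncation near the boundary).} For $\epsilon>0$ put $u_\epsilon=\max\{u,U-\epsilon\}$ and $v_\epsilon=\max\{v,V-\epsilon\}$. These are bounded plurisubharmonic functions with $u\le u_\epsilon\le U$. By hypothesis (iii), $u_\epsilon=U-\epsilon$ outside a compact set $K_\epsilon$, and similarly $v_\epsilon=V-\epsilon$ outside a compact set. Apply Lemma \ref{lem-new.Int.by.parts } with $u_1=u_\epsilon$, $u_2=U-\epsilon$, $v_1=v_\epsilon$, $v_2=V-\epsilon$, first assuming the $w_k$ are bounded. This gives the identity for $u_\epsilon-U+\epsilon$ and $v_\epsilon-V+\epsilon$. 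Note that $u_\epsilon-U+\epsilon=\max\{u-U+\epsilon,0\}$ increases to $u-U+\epsilon$, hmm, which does not tend to $u-U$ as $\epsilon\to0$.

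A better choice is $u_\epsilon=\max\{u,U-\epsilon\}$ compared against $u_1=u$, $u_2=u_\epsilon$. Then $u-u_\epsilon=\min\{u-U+\epsilon,0\}$ vanishes near $\partial\Omega$, and as $\epsilon\to0$ it increases to $u-U$ pointwise, boundedly since $|u-U|$ is bounded. The same holds for $v-v_\epsilon$. The lemma then gives
$$\int (u-u_\epsilon)\,dd^c(v-v_\epsilon)\wedge T=\int (v-v_\epsilon)\,dd^c(u-u_\epsilon)\wedge T.$$

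\textbf{Step 2 (the limit $\epsilon\to0$, the main obstacle).} We must show that $\int (u-u_\epsilon)\,dd^c v_\epsilon\wedge T\to\int (u-U)\,dd^cV\wedge T$, and similarly for the other mixed terms. The terms with $dd^cv$ and $dd^cu$ are handled by monotone/dominated convergence, since the integrands are bounded, converge pointwise, and the measures $dd^cv\wedge T$, $dd^cu\wedge T$ are fixed.

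The hard terms are those where the measure $dd^cv_\epsilon\wedge T$ also varies with $\epsilon$. Since $v_\epsilon$ decreases to $\max\{v,V\}=V$, we have $v_\epsilon\to V$ monotonically. I would write $\int (u-u_\epsilon)\,dd^cv_\epsilon\wedge T$ and compare it with $\int (u-U)\,dd^cv_\epsilon\wedge T$. The difference is controlled by $\epsilon$ times the mass of $dd^cv_\epsilon\wedge T$ on $\{u<U-\epsilon\}$, up to an error near the boundary. I expect this to need total mass bounds of the type in Lemma \ref{lem1.pre}, but near the boundary these masses are not uniformly bounded. So instead I would use the second part of Corollary \ref{cor.P10} with the fixed function $u_\delta$ (for fixed $\delta$), and let $\epsilon\to0$ first, obtaining equality. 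Then let $\delta\to0$, using the monotonicity in $\delta$ (the integrands $u-u_\delta$ decrease) together with the first, limsup part of Corollary \ref{cor.P10} and monotone convergence on the fixed limit measure. Arranging a double limit with $\epsilon\le\delta$, and using that all functions dominate $\min(u,v)-1\in\mathcal{E}$, gives two inequalities. I would then use symmetry of the roles, or apply Cegrell's inequality from the introduction locally, to obtain the reverse inequality.

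\textbf{Step 3 (general $w_k\in\mathcal{E}$).} Approximate by $w_k^j=\max\{w_k,-j\}$, which decrease to $w_k$ and lie in $\mathcal{E}$. The bounded case gives the identity for each $j$. For each term, Corollary \ref{cor.P10} (applied with the pairs $(u,U)$ or $(v,V)$, the mixed operator containing $dd^cv$ and so on, all bounded below by a common $\mathcal{E}$ function) gives only limsup inequalities when $u\ne U$ near the boundary. I would therefore first pass $j\to\infty$ in the truncated identity of Step 1, where the second part of Corollary \ref{cor.P10} gives genuine limits, and only then let $\epsilon\to0$ as in Step 2. This order of limits is the key to making everything work.
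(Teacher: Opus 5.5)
Your route is essentially the paper's. You apply Lemma \ref{lem-new.Int.by.parts } to $u-\max\{u,U-\delta\}$ and $v-\max\{v,V-\epsilon\}$, with $w_p$ replaced by $\max\{w_p,-j\}$. Then:
\begin{itemize}
\item let $j\to\infty$ by the second part of Corollary \ref{cor.P10}, since all integrands are bounded and vanish off a compact set;
\item let $\epsilon\to0$ with $\delta$ fixed, again obtaining an equality (second part of Corollary \ref{cor.P10} for $\int(u-\max\{u,U-\delta\})\,dd^c\max\{v,V-\epsilon\}\wedge T$, monotone convergence for the rest);
\item let $\delta\to0$; here $\int(v-V)\,dd^c\max\{u,U-\delta\}\wedge T$ is controlled only by the limsup part of Corollary \ref{cor.P10}, which gives one inequality;
\item conclude by exchanging $(u,U)$ and $(v,V)$.
\end{itemize}
The two truncation parameters must be independent from the start. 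With the single $\epsilon$ of your Step 1, the two varying terms $\int(u-u_\epsilon)\,dd^cv_\epsilon\wedge T$ and $\int(v-v_\epsilon)\,dd^cu_\epsilon\wedge T$ sit on opposite sides of the identity. Each admits only an upper bound in the limit, so the two sides cannot be compared. Your Step 2 correctly switches to independent $\delta$ and $\epsilon$.

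The genuine error is your opening reduction. Bounded integrands do not make the four integrals finite, because $dd^cv\wedge T$, $dd^cU\wedge T$, etc.\ may have infinite mass near $\partial\Omega$. A counterexample already exists for $n=1$ on the unit disc. Take $U=V=-\sqrt{1-|z|}$, which is bounded subharmonic with $\Delta U\sim\frac14(1-|z|)^{-3/2}$ near the circle, and put $u=v=2U$. Then all four integrals in the statement equal $-\infty$. So ``cancelling'' means forming $-\infty-(-\infty)$, and the symmetric identity is not equivalent to the theorem.

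You have to keep the four-term form throughout, with every term in $[-\infty,0]$ so that all sums make sense. The initial truncated identities have finite terms. Each limit is then taken termwise, either by Corollary \ref{cor.P10} or by monotone convergence; dominated convergence is unavailable for the same infinite-mass reason.

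Minor slips:
\begin{itemize}
\item $\max\{v,V-\epsilon\}$ increases, not decreases, to $V$. This is harmless, since Corollary \ref{cor.P10} allows either monotonicity.
\item $\min(u,v)-1$ is not plurisubharmonic. A constant $-M$ would do, and Corollary \ref{cor.P10} needs no common minorant anyway.
\item Cegrell's inequality is not available, since $u$ need not tend to $0$ at $\partial\Omega$. The reverse inequality must come from symmetry.
\end{itemize}
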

\begin{proof} 
	Since $(u, U)$ and $(v, V)$ play the same role, it suffices to prove that
	\begin{center}
	\begin{align*}
		\int_{\Omega}(u-U) dd^cv\wedge T
		+\int_{\Omega}(v-V) dd^cU\wedge T
		&\geq\int_{\Omega}(v-V)dd^cu\wedge T\\
		&+\int_{\Omega}(u-U) dd^cV\wedge T.
	\end{align*}
\end{center}
	For  $m\in\mathbb{Z}^+$, we denote
		$$U_m=\max\{u, U-2^{-m}\},\qquad V_m=\max\{v, V-2^{-m}\},$$
		and
	$$w_{p, m}=\max\{w_p, -m\}, \, 1\leq p\leq n-1.$$
	 Since $(u-U)(z)$ and $(v-V)$ tend to $0$ as $z\rightarrow\partial\Omega$, we have $u=U_m$ and $v=V_m$ outside a compact
	subset of $\Omega$ for each $m\in\mathbb{Z}^+$. Using Lemma \ref{lem-new.Int.by.parts }, we get
	$$\int_{\Omega}(u-U_k)dd^c(v-V_l)\wedge T_m=\int_{\Omega}(v-V_l)dd^c(u-U_k)\wedge T_m,$$
	for every $k, l, m\in\mathbb{Z}^+$, where $T_m=dd^cw_{1, m}\wedge...\wedge dd^cw_{n-1, m}$.
	
	Letting $m\rightarrow\infty$ and applying Corollary \ref{cor.P10}, we have
		$$\int_{\Omega}(u-U_k)dd^c(v-V_l)\wedge T=\int_{\Omega}(v-V_l)dd^c(u-U_k)\wedge T,$$
	for every $k, l\in\mathbb{Z}^+$. This equality can be rewritten as
	$$\int_{\Omega}(u-U_k)dd^cv\wedge T+\int_{\Omega}(v-V_l)dd^cU_k\wedge T
	=\int_{\Omega}(v-V_l)dd^cu\wedge T+\int_{\Omega}(u-U_k)dd^c V_l\wedge T.$$
	Let $l\rightarrow\infty$. By using Corollary \ref{cor.P10} again and using the Monotone Convergence Theorem, it follows that
	\begin{equation}\label{eq1.main}
		\int_{\Omega}(u-U_k)dd^cv\wedge T+\int_{\Omega}(v-V)dd^cU_k\wedge T
		=\int_{\Omega}(v-V)dd^cu\wedge T+\int_{\Omega}(u-U_k)dd^c V\wedge T.
	\end{equation}
	By the Monotone Convergence Theorem, we have
	\begin{equation}\label{eq2.main}
		\lim_{k\rightarrow\infty}\int_{\Omega}(u-U_k)dd^cv\wedge T=\int_{\Omega}(u-U)dd^cv\wedge T,
	\end{equation}
	and
	\begin{equation}\label{eq4.main}
			\lim_{k\rightarrow\infty}\int_{\Omega}(u-U_k)dd^cV\wedge T=\int_{\Omega}(u-U)dd^cV\wedge T.
	\end{equation}
	By Corollary \ref{cor.P10}, we have
	\begin{equation}\label{eq5.main}
			\limsup_{k\rightarrow\infty}\int_{\Omega}(v-V)dd^cU_k\wedge T\leq \int_{\Omega}(v-V)dd^cU\wedge T.
	\end{equation}
	Combining \eqref{eq1.main}, \eqref{eq2.main}, \eqref{eq4.main} and \eqref{eq5.main}, we obtain
	\begin{center}
	\begin{align*}
		\int_{\Omega}(u-U) dd^cv\wedge T
		+\int_{\Omega}(v-V) dd^cU\wedge T
		&\geq\int_{\Omega}(v-V)dd^cu\wedge T\\
		&+\int_{\Omega}(u-U) dd^cV\wedge T,
	\end{align*}
\end{center}
	as desired.
\end{proof}
\begin{proof}[End of the proof of Theorem \ref{main}]
		Since $(u, U)$ and $(v, V)$ play the same role, it suffices to prove that
	\begin{center}
		\begin{align*}
			\int_{\Omega}(u-U) dd^cv\wedge T
			+\int_{\Omega}(v-V) dd^cU\wedge T
			&\geq\int_{\Omega}(v-V)dd^cu\wedge T\\
			&+\int_{\Omega}(u-U) dd^cV\wedge T.
		\end{align*}
	\end{center}
	For  $m\in\mathbb{Z}^+$, we denote
	$$u_m=\max\{u, U-2^{m}\}\quad\mbox{and}\quad v_m=\max\{v, V-2^{m}\}.$$
	By Theorem \ref{the.main-bounded}, we have
			$$\int_{\Omega}(u_k-U) dd^cv_l\wedge T
		+\int_{\Omega}(v_l-V) dd^cU\wedge T
		=\int_{\Omega}(v_l-V)dd^cu_k\wedge T
		+\int_{\Omega}(u_k-U) dd^cV\wedge T,$$
	for every $k, l\in\mathbb{Z}^+$. Then
	\begin{equation}\label{eq1.proofmain}
		\int_{\Omega}(u_m-U) dd^cv_l\wedge T
		+\int_{\Omega}(v_l-V) dd^cU\wedge T
		\geq \int_{\Omega}(v_l-V)dd^cu_k\wedge T
		+\int_{\Omega}(u-U) dd^cV\wedge T,
	\end{equation}
	for every $k, l, m\in\mathbb{Z}^+$ with $m\leq k$.
	Since $(u-U)(z)\rightarrow 0$ as $z\rightarrow\partial\Omega$,
	there exists a compact subset $K$ of $\Omega$ such that $u_k=u$ on $\Omega\setminus K$ for every $k\in\mathbb{Z}^+$. 
	 Let 
	$\chi\in C_c(\Omega)$ be a test function satisfying $0\leq\chi\leq 1$ and $K\subset\supp \chi$. For $k\geq  1$, since
	$1-\chi=0$ on $K$ and $u=u_k$ on $\Omega\setminus K$, we have
	\begin{equation}\label{eq2.proofmain}
		\int_{\Omega}(v_l-V)(1-\chi) dd^cu_k\wedge T=\int_{\Omega}(v_l-V)(1-\chi) dd^cu\wedge T.
	\end{equation}
	Moreover, by Theorem \ref{the.P10extend}, we have
	\begin{equation}\label{eq3.proofmain}
		\lim_{k\to\infty}\int_{\Omega}(v_l-V)\chi\, dd^cu_k\wedge T=\int_{\Omega}(v_l-V)\chi\, dd^cu\wedge T.
	\end{equation}
	By \eqref{eq2.proofmain} and \eqref{eq3.proofmain}, we get
	\begin{equation}\label{eq4.proofmain}
		\lim_{k\to\infty}\int_{\Omega}(v_l-V)\, dd^cu_k\wedge T=\int_{\Omega}(v_l-V)\, dd^cu\wedge T.
	\end{equation}
	Combining \eqref{eq1.proofmain} and \eqref{eq4.proofmain}, we get
		$$\int_{\Omega}(u_m-U) dd^cv_l\wedge T
		+\int_{\Omega}(v_l-V) dd^cU\wedge T
		\geq \int_{\Omega}(v_l-V)dd^cu\wedge T
		+\int_{\Omega}(u-U) dd^cV\wedge T,$$
	for every $m, l\in\mathbb{Z}^+$. Let $l\rightarrow\infty$. By Corollary \ref{cor.P10} and the Monotone Convergence Theorem, we have
		$$\int_{\Omega}(u_m-U) dd^cv\wedge T
	+\int_{\Omega}(v-V) dd^cU\wedge T
	\geq \int_{\Omega}(v-V)dd^cu\wedge T
	+\int_{\Omega}(u-U) dd^cV\wedge T.$$
	Letting $m\rightarrow\infty$, we obtain
		$$\int_{\Omega}(u-U) dd^cv\wedge T
	+\int_{\Omega}(v-V) dd^cU\wedge T
	\geq \int_{\Omega}(v-V)dd^cu\wedge T
	+\int_{\Omega}(u-U) dd^cV\wedge T.$$
	The proof is complete.
\end{proof}
\noindent
\textbf{Data availability.} Data sharing not applicable to this article as no datasets were generated or analysed during the current study.
\\

\noindent
\textbf{\Large Declarations}
\\

\noindent
\textbf{Conflict of interest.} The authors declare that there is no conflict of interest.
\\

\noindent
\textbf{Ethics approval.} Not applicable.
\\

\end{document}